\def\hw{\widehat{w}}
\newcounter{rot}
\newcommand{\hatt}[1]{\widehat #1}
\def\a{\alpha} \def\b{\beta}  \def\D{\Upsilon}
\def\e{\varepsilon}    \def\g{\gamma}
\def\G{\Gamma}  
     \def\l{\lambda}
  \def\n{\nu} 
 \def\om{\omega}
\def\hY{\hatt{Y}}
\def\hX{\hatt{X}}
\def\cP{{\cal P}}
\def\cT{{\cal T}}
\def\cM{{\cal M}}
\newtheorem{theorem}{Theorem}
\newtheorem{lemma}[theorem]{Lemma}
\newtheorem{corollary}[theorem]{Corollary}
\def\cT{{\mathcal T}}
\newcommand{\brac}[1]{\left(#1\right)}
\newcommand{\bfrac}[2]{\left(\frac{#1}{#2}\right)}
\def\cE{{\cal E}}
\newcommand{\set}[1]{\left\{#1\right\}}
\def\Var{{\bf Var}}
\newcommand{\ignore}[1]{}
\newcommand{\beq}[2]{\begin{equation}\label{#1}#2\end{equation}}
\def\bc{{\bf c}}
\def\bC{{\bf C}}
\newcommand{\multstar}[1]{\begin{multline*}#1\end{multline*}}
\newcommand{\mult}[2]{\begin{multline}\label{#1}#2\end{multline}}
\newcommand{\dd}{\mathrm{d}}
 \newcommand{\E}{\mathbb{E}}
\def\Pr{\mathbb{P}}
\def\cS{{\mathcal S}}
\begin{document}
\author{Alan Frieze\thanks{Department of Mathematical Sciences,Carnegie Mellon University,Pittsburgh PA 15213, Research supported in part by NSF grant DMS1661063}, Wesley Pegden\thanks{Department of Mathematical Sciences,Carnegie Mellon University,Pittsburgh PA 15213. Research supported in part by NSF grant DMS}, Gregory Sorkin\thanks{Department of Mathematics, London School of Economics, London,}, Tomasz Tkocz\thanks{Department of Mathematical Sciences,Carnegie Mellon University,Pittsburgh PA 15213. Research supported in part by the Collaboration Grants from the Simons Foundation.}}

\title{Minimum-weight combinatorial structures under random cost-constraints}
\maketitle

 \begin{abstract}
 Recall that Janson showed that if the edges of the complete graph $K_n$ are assigned exponentially distributed independent random weights, then the expected length of a shortest path between a fixed pair of vertices is asymptotically equal to $(\log n)/n$.  We consider analogous problems where edges have not only a random length but also a random cost, and we are interested in the length of the minimum-length structure whose total cost is less than some cost budget.  For several classes of structures, we determine the correct minimum length structure as a function of the cost-budget, up to constant factors.  Moreover, we achieve this even in the more general setting where the distribution of weights and costs are arbitrary, so long as the density $f(x)$ as $x\to 0$ behaves like $cx^\gamma$ for some $\gamma\geq 0$; previously, this case was not understood even in the absence of cost constraints.  We also handle the case where each edge has several independent costs associated to it, and we must simultaneously satisfy budgets on each cost.  In this case, we show that the minimum-length structure obtainable is essentially controlled by the product of the cost thresholds.
 \end{abstract}
 \begin{footnotesize}
 \noindent {\em 2010 Mathematics Subject Classification.} Primary 05C80; Secondary 05C85, 90C27.

 \noindent {\em Key words.} Minimum weight, cost constraint, shortest path, matching, TSP.
 \end{footnotesize}

 \section{Introduction}
Let the edges of the complete graph be given independent random edge weights $w(e)$ and a random cost $c(e)$ for $e\in E(K_n)$. We are interested in the problem of estimating the minimum weight of a combinatorial structure $S$ where the total cost of $S$ is bounded by some value $C$.   More generally, we allow $r$ costs $\bc(e)=(c_i(e),i=1,2,\ldots,r)$ for each edge.  The distribution of weights $w(e)$ will be independent copies of $Z_E^\a$ where $Z_E$ denotes the exponential rate one random variable and $\a\leq 1$. The distribution of costs $c_i(e)$ will be $Z_E^\b$ where $\b\leq 1$.  In Section \ref{mgd} we will see that, since we are allowing powers of exponentials, a simple coupling argument will allow us to model a very general class of independent weights and costs, where we require just that the densities satisfy $f(x)\approx cx^\g,\g\geq 0$ as $x\to 0$; here we mean that $cx^\g/f(x)\to 1$ as $x\to 0$.

Suppose we are given cost budgets of $\bC=(C_i,i=1,2,\ldots,r)$ and we consider the following problem: let $\cS$ denote some collection of combinatorial strutures such as paths, matchings, Hamilton cycles, we would like to solve
\[
Opt(\cS,\bC):\text{Minimise }w(S)\text{ subject to }S\in \cS\text{ and }c_i(S)\leq C_i,i=1,2,\ldots,r,
\]
and let 
\[
\text{$w^*(\bC)$ denote the minimum value in $Opt(\cS,\bC)$.}
\]

We remark that Frieze and Tkocz \cite{FT1}, \cite{FT2} have considered finding minimum weight spanning trees or arborescences in the context of a single cost constraint and uniform $[0,1]$ weights and costs. I.e. the case where $\cS$ is the set of spanning trees of $K_n$ and the case where $\cS$ is the set of spanning arborescences of $\vec{K}_n$. In these cases, they obtain asymptotically optimal estimates for those problems, whereas for the problems in the present paper we have only obtained estimates that are correct to within a constant factor.

The first problem we study involves paths, here denoted as minimum weight paths for consistency with the remainder of the paper. Let $\cP(i,j)$ denote the set of paths from vertex $i$ to vertex $j$ in $K_n$.

{\bf Constrained Minimum Weight Path (CMWP)}: $Opt(\cP(1,n),\bC)$.\\
Without the constraint $\bc(P)\leq \bC$, there is a beautiful result of Janson \cite{Jan} that gives a precise value for the expected minimum weight of a path, when the $w(e)$'s are independent exponential mean one. With the constraints, we are only able to estimate the expected minimum weight up to a constant (but can do so for a more general class of distributions). 

Throughout the paper we let
\[
\D=\prod_{i=1}^rC_i
\]
be the product of the cost thresholds.  Our results show that for the structures we consider, this product of the cost thresholds controls the dependency of the minimum-weight structure on the vector of cost constraints.  In particular, for the minimum weight path problem, we have: 
\begin{theorem}\label{th1}
If $\frac{n{\D^{1/\b}}}{\log^{r/\b}n}\to\infty$ and $C_i\leq 10\log n,\,i=1,2,\ldots,r$ then w.h.p.
\[
w^*(\bC)=\Theta\bfrac{\log^{r\a/\b+1}n}{n^\a{\D^{\a/\b}}}.
\]
$A=\Theta(B)$ denotes $A=O(B)$ and $B=O(A)$. And here, the hidden constants depend only on $r,\a,\b$.
\end{theorem}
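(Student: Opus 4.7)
My plan is to prove the theorem by moment estimates on $N_\ell$, the number of paths from $1$ to $n$ of length $\ell$ satisfying $w(P)\le s$ and $c_i(P)\le C_i$ for all $i$; the analysis will identify $\ell^*=\Theta(\log n)$ as the optimal length. Using $\Pr(Z_E^\gamma\le t)\sim t^{1/\gamma}$ as $t\to 0$, the small-ball asymptotic for sums of i.i.d.\ copies takes the form
\[
\Pr\!\left(\sum_{i=1}^\ell Z_i^\gamma\le t\right)=\Theta(\ell^{-1/2})(D_\gamma t/\ell)^{\ell/\gamma}
\]
(obtained by convolution of power densities together with Stirling on $\Gamma(\ell/\gamma+1)$, and valid for $t/\ell=O(1)$ by Cram\'er's theorem with rate function $I(x)\sim(1/\gamma)\log(1/x)$ as $x\to 0$). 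By independence of weights and costs across edges and coordinates, for a fixed length-$\ell$ path $P$,
\[
\Pr(P\text{ feasible})\asymp \ell^{-(r+1)/2}(s/\ell)^{\ell/\a}\prod_{i=1}^r(C_i/\ell)^{\ell/\b},
\]
so $\E[N_\ell]\asymp n^{\ell-1}$ times this. Solving $\E[N_\ell]=\Theta(1)$ in $s$ yields $s_\ell^*=\ell^{1+r\a/\b}n^{-\a(\ell-1)/\ell}\D^{-\a/\b}$, minimized over $\ell$ at $\ell^*=\a\b\log n/(\b+r\a)$ giving $s^*=\Theta(\log^{1+r\a/\b}n/(n^\a\D^{\a/\b}))$---exactly the theorem's value. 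The hypothesis $C_i\le 10\log n$ ensures $C_i/\ell^*=O(1)$ so the moderate-deviations asymptotic is valid, and $n\D^{1/\b}/\log^{r/\b}n\to\infty$ is what ensures $\E[N_{\ell^*}]\to\infty$ under a large constant multiplier of $s^*$.

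For the upper bound I fix $\ell=\lfloor\ell^*\rfloor$, take $s=Ks^*$ with $K$ large, and apply the second moment method. For two length-$\ell$ paths $P,P'$ sharing $j$ edges, conditioning on the shared-edge contributions and integrating gives a bound
\[
\Pr(P,P'\text{ both feasible})/\Pr(P)^2\;\lesssim\;C^j(\ell/s)^{j/\a}(\ell^r/\D)^{j/\b}
\]
for a constant $C$. Combined with a count of pairs of length-$\ell$ paths in $K_n$ from $1$ to $n$ sharing $j$ edges (carefully handling the simple-path constraint, which rules out certain overlap patterns) the sum over $j$ should be dominated by $j=0$, yielding $\E[N_{\ell^*}^2]=(1+o(1))\E[N_{\ell^*}]^2$ and hence $N_{\ell^*}>0$ w.h.p.\ via Paley--Zygmund. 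For the lower bound I use the first moment directly: with $s'=K's^*$ and $K'<1$ small, the sum $\sum_{\ell\ge 1}\E[N_\ell(s')]$ is dominated by terms near $\ell^*$, where it equals $\Theta(n^{\log K'/(1+r\a/\b)}(\log n)^{-(r+1)/2})$ times a Gaussian in $\ell-\ell^*$ (arising from the quadratic expansion of $\log s_\ell^*$ about $\ell^*$), summing to $o(1)$; terms with $\ell$ far from $\ell^*$ decay super-polynomially.

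I expect the main obstacle to be the second moment estimate, specifically the pair-count of simple paths in $K_n$ with exactly $j$ shared edges in a form tight enough to cancel the correlation correction across all $r+1$ simultaneous constraints. If a direct Paley--Zygmund falls short, a constructive alternative would be to work in the cost-filtered subgraph $G(n,p_c)$ with $p_c\asymp(\D/\log^r n)^{1/\b}$ and locate a minimum-weight path there via a Dijkstra-type first-passage-percolation analysis adapted to $Z^\a$ weights. A secondary technical point is verifying the density asymptotic $\Pr(\sum Z^\gamma\le t)\asymp(Dt/\ell)^{\ell/\gamma}$ in the moderate-deviations regime $t/\ell=O(1)$, where a pure local convolution argument is insufficient and a Cram\'er tail estimate is needed---this is precisely why the hypothesis $C_i\le 10\log n$ is imposed.
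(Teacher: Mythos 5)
Your lower bound is the same as the paper's: a union bound over path lengths $k$, with the small-ball estimate of Lemma \ref{lemB} applied once to the weights and once to each of the $r$ costs; that part is fine. Your upper bound, however, takes a genuinely different route. The paper does not touch second moments of path counts: it filters the edge set by cost (keeping edges with $c_i(e)\le C_i/(3L)$, $L=\Theta(\log n)$, so the filtered graph is $G_{n,p}$ with $np\gtrsim n\D^{1/\b}/\log^{r/\b}n\to\infty$), runs Janson-style Dijkstra explorations from $1$ and from $n$ to reach $\Theta(\min(n,1/p))$ vertices at distance $O(\log n/(np))$, joins the two clusters by one cheap edge, bounds the height of the Dijkstra trees (a random-recursive-tree argument) to guarantee the path has $O(\log n)$ edges and hence meets the cost budgets, and finally passes from exponential weights to $Z_E^\a$ weights by H\"older. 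This constructive route buys two things your plan has to fight for: it never needs a lower bound on feasibility probabilities of individual paths, and it adapts (Section \ref{smalldelta}) to the sparse range $\D^{1/\b}$ only slightly above $\log^{r/\b}n/n$, where vertex-expansion estimates fail and the paper has to track $\nu_k$ as a process and use a second cost class $E_1$ for the connecting edge.

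The genuine gaps in your proposal are exactly the two places you flag, and they are not cosmetic. First, the asserted estimate $\Pr(\sum_{i\le\ell}Z_i^\gamma\le t)=\Theta(\ell^{-1/2})(D_\gamma t/\ell)^{\ell/\gamma}$ with a universal $D_\gamma$ is false in the regime you need it, $t/\ell=\Theta(1)$: already for $\gamma=1$ one has $\Pr(\Gamma(\ell)\le c\ell)\asymp \ell^{-1/2}(ce^{1-c})^\ell$, so the base constant depends on $c=t/\ell$; worse, since $C_i$ may be as large as $10\log n$ while $\ell^*=\Theta(\log n)$, the ratio $C_i/\ell^*$ can exceed $\E Z_E^\b$, where the probability is $\Theta(1)$ and no formula of that shape applies. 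For a $\Theta$-theorem this is repairable (cap at $1$, use a Bahadur--Rao/local CLT statement or crude two-sided bounds on a compact range of $t/\ell$, and note that base-constant errors only shift $s^*$ by factors depending on $r,\a,\b$), but uniformity over the whole allowed range of the $C_i$ (individual $C_i$ can also be polynomially small, since only the product $\D$ is constrained) must be argued, not asserted. Second, the second-moment step is the heart of your upper bound and is left as ``should be dominated by $j=0$.'' The clean identity $\Pr(P,P'\text{ both feasible})\le\Pr(P\text{ feasible})\cdot\Pr(\text{a length-}(\ell-j)\text{ edge set is feasible})$ reduces the correlation term to a ratio of feasibility probabilities at lengths $\ell$ and $\ell-j$, and a block-decomposition count of overlapping path pairs then gives a per-shared-edge factor $\asymp 1/(nq)$ ($q$ the per-edge feasibility probability) and a per-block factor $\mathrm{poly}(\log n)/n$, which is indeed summable once $s=Ks^*$ with $K=K(r,\a,\b)$ large; but controlling that ratio uniformly across the mixed regimes of the $C_i$ is precisely the unproven work, and it is tightest in the sparse range where the paper needed its separate argument. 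Your stated fallback --- Dijkstra in the cost-filtered $G_{n,p}$ --- is essentially the paper's proof, so as written the proposal is an outline of a plausible alternative rather than a complete argument.
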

For the unconstrained problem, see Hassin and Zemel \cite{HZ}, Janson \cite{Jan} and Bhamidi and van der Hofstad \cite{BH}, \cite{BH1}.

Now consider the case of perfect matchings in the complete bipartite graph $K_{n,n}$. Let $\cM_2$ denote the set of perfect matchings in $K_{n,n}$.


{\bf Constrained Assigment Problem (CAP)}: $Opt(\cM_2,\bC)$.
\begin{theorem}\label{th2}
If $\D^{1/\b}\gg n^{r/\b-1}\log n$\footnote{Here $A=A(n)\gg B=B(n)$ if $A/B\to \infty$ as $n\to\infty$.} and $C_i\leq n,\,i=1,2,\ldots,r$ then w.h.p.
\[
w^*(\bC)=\Theta\bfrac{n^{1+r\a/\b-\a}}{{\D^{\a/\b}}}.
\]
\end{theorem}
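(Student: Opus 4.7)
The plan is to pair a threshold/sparsification upper bound with a direct Markov lower bound, the same high-level pattern used for Theorem~\ref{th1}, exploiting that (unlike in the path problem) there is no extra logarithmic factor because the edges of the optimal matching are spread across many weight scales rather than concentrated near a single threshold.

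\textbf{Upper bound.} Set cost thresholds $t_i = C_i/(Kn)$ for a sufficiently large constant $K=K(r,\alpha,\beta)$, and call edge $e$ \emph{admissible} if $c_i(e)\leq t_i$ for all $i$. An edge is admissible with probability $p = \prod_i \Pr(Z_E^\b\leq t_i) \sim \prod_i t_i^{1/\beta} = \Theta\bigl(\Delta^{1/\beta}/n^{r/\beta}\bigr)$, and the hypothesis $\Delta^{1/\beta}\gg n^{r/\beta-1}\log n$ gives $np\gg\log n$; hence the admissible bipartite subgraph contains a perfect matching w.h.p.\ by the standard Erd\H{o}s--R\'enyi/Hall argument. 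Since weights and costs are independent, admissible edges carry iid $Z_E^\alpha$ weights, and I would adapt the Walkup--Aldous--W\"astlund analysis of the minimum-weight assignment problem to the sparsified bipartite graph to show that its minimum-weight matching $M^*$ satisfies $w(M^*)=O(n^{1-\alpha}/p^\alpha)$ w.h.p., which is the desired order after substituting $p$. A Chernoff bound then verifies the cost constraints: $\E[c_i(e)\mid e\text{ admissible}]$ is proportional to $t_i$ (by a direct computation with the density of $Z_E^\beta$ near $0$), so $\E[c_i(M^*)]\lesssim nt_i=C_i/K$, which is at most $C_i$ w.h.p.\ for $K$ large enough.

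\textbf{Lower bound.} Fix any matching $M$ satisfying $c_i(M)\leq C_i$ for each $i$. Markov's inequality applied to each cost separately gives that the number of edges $e\in M$ with $c_i(e)>2rC_i/n$ is at most $n/(2r)$; a union bound over $i$ leaves at least $n/2$ \emph{good} edges of $M$ for which all costs satisfy $c_i(e)\leq 2rC_i/n$. Define good edges of $K_{n,n}$ by the same cost thresholds; the number of such good edges concentrates at $\Theta(n^2p)$, and on this set the weights are iid $Z_E^\alpha$, independent of the cost event defining ``good''. The sum of the weights of any $n/2$ good edges is bounded below by the sum of the $n/2$ smallest good weights, whose order-statistic expectation is $\sum_{k=1}^{n/2}(k/(n^2p))^\alpha = \Omega(n^{1-\alpha}/p^\alpha)$, holding w.h.p.\ by standard concentration of order statistics. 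Since this lower bound is a statement about good edges and their weights rather than about $M$, it applies uniformly to every cost-feasible $M$ and hence lower bounds $w^*(\bC)$.

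\textbf{Main obstacle.} The most delicate step is the upper bound on the minimum-weight matching in the random sparsified bipartite graph: a crude ``threshold admissible edges again by a weight $\tau$ and appeal to Hall'' argument introduces a superfluous $\log^\alpha n$ factor, whereas the sharp $\Theta(n^{1-\alpha}/p^\alpha)$ requires an analysis that sums contributions across many weight scales, either by a sparsified version of the $\sum 1/k^2$-type computation of Aldous/W\"astlund or by a sequence of Hall-type matchings at geometrically increasing weight scales patched together via augmenting paths. A secondary subtlety is the concentration of $c_i(M^*)$: since $M^*$ depends on all admissible edges (not a fixed set), one needs either a union bound over the polynomially-many relevant cost profiles of $M^*$ or a direct bounded-differences inequality on the assignment functional.
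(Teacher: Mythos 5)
Your high-level scaffolding (cost-threshold sparsification, then a minimum-weight argument) matches the paper, and your lower bound is a genuinely different and valid route, but the upper bound has one overcomplication and one essential gap left as a sketch.

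\textbf{Upper bound --- the cost constraint is deterministic.} With threshold $t_i=C_i/(Kn)$, a perfect matching uses exactly $n$ admissible edges, so $c_i(M^*)\le n\cdot t_i=C_i/K\le C_i$ \emph{with probability one}; no Chernoff bound, no bounded-differences inequality, and no union bound over cost profiles are needed. The ``secondary subtlety'' you flag is a non-issue. The paper takes $t_i=C_i/n$ and makes exactly this observation (``Note that by construction, a perfect matching $M$ of $G$ satisfies $c_i(M)\le C_i$'').

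\textbf{Upper bound --- the weight bound is the real content and you leave it open.} You correctly identify that a naive Hall argument gives an extra $\log^\alpha n$, and you propose either an Aldous/W\"astlund-style sparsified analysis or a multi-scale augmenting-path construction, but neither is carried out. The paper's actual argument is a much shorter device due to Walkup: replace $w(e)$ by $\min\{Z_1(e),Z_2(e)\}$ with $Z_1,Z_2$ i.i.d. copies of $Z_W$ satisfying $\Pr(Z_W\ge x)^2=\Pr(Z_E^\alpha\ge x)$, assign $Z_1(e)$ to the left endpoint and $Z_2(e)$ to the right; the subgraph $H$ formed by taking the two $Z_1$-cheapest edges at each left vertex and the two $Z_2$-cheapest at each right vertex is distributed as a bipartite $2$-out graph, which has a perfect matching w.h.p.\ (Walkup 1980). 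Each selected edge is a minimum or second-minimum of $\approx d=np$ copies of $Z_W$, whose expectation is $O(d^{-\alpha})$ (Corollary~\ref{cor1}), so the matching has expected weight $O(n/d^{\alpha})=O(n^{1+r\alpha/\beta-\alpha}/\Delta^{\alpha/\beta})$, with concentration from independence across vertices. Without this (or a worked-out substitute), your upper bound is incomplete.

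\textbf{Lower bound --- correct, and a different route.} The paper uses a first-moment union bound over all $n!$ matchings: by Lemma~\ref{lemB}, $\Pr(w(\Lambda)\le L)\cdot\prod_i\Pr(c_i(\Lambda)\le C_i)$ decays fast enough that the expected number of cheap cost-feasible matchings is $o(1)$. Your argument instead notes (via Markov and a union over the $r$ costs) that every cost-feasible $M$ must contain at least $n/2$ edges with all $c_i(e)\le 2rC_i/n$, and then lower bounds $w(M)$ by the sum of the $n/2$ smallest weights among all such ``good'' edges, which (by order-statistic concentration for $\Theta(n^2p)$ i.i.d.\ $Z_E^\alpha$ variables) is $\Omega(n^{1-\alpha}/p^{\alpha})$ w.h.p. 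This is a clean pointwise argument that avoids the $n!$ union bound; in exchange it requires establishing concentration of the partial sum of order statistics, which is straightforward but should be spelled out. Both routes yield the same $\Omega(n^{1+r\alpha/\beta-\alpha}/\Delta^{\alpha/\beta})$ bound.
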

We note that requiring a lower bound on $\D$ is necessary in Theorem \ref{th2}. Indeed, if ${\D^{1/\b}}\leq e^{-(r+1)}\b n^{r/\b-1}$ then the optimization problem is infeasible w.h.p. To see this we bound the expected number of feasible solutions as follows: let $Z_1,Z_2,\ldots,Z_r$ be independent sums of $n$ independent copies of $Z_E^{1/\b}$. Then,
\[
n!\prod_{i=1}^r\Pr\brac{Z_i\leq C_i}\leq n!\prod_{i=1}^r\frac{C_i^{n/\b}}{\b^nn!n^{n(1/\b-1)}}\leq \bfrac{e^r\D^{1/\b}}{\b n^{r/\b-1}}^n=o(1).
\]
We use Lemma \ref{lemB} here to bound $\Pr(Z_i\leq C_i)$. We note that a problem similar to this was studied by Arora, Frieze and Kaplan \cite{AFK} with respect to the worst-case.

Now consider the case of perfect matchings in the complete graph $K_{n}$. Let $\cM_1$ denote the set of perfect matchings in $K_{n}$.

{\bf Constrained Matching Problem (CMP)} $Opt(\cM_1,\bC)$.
\begin{theorem}\label{th3}
If  ${\D^{1/\b}}\gg n^{r-1}\log n$  and $C_i\leq n,\,i=1,2,\ldots,r$ then w.h.p.
\[
w^*(\bC)=\Theta\bfrac{n^{1+r\a/\b-\a}}{{\D^{\a/\b}}}.
\]
\end{theorem}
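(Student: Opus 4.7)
My plan is to establish matching upper and lower bounds, in close parallel to the proof of Theorem \ref{th2} with the bipartite structure replaced by its $K_n$ analogue.

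For the upper bound, I fix a small $\e>0$, set $x_i=2\e C_i/n$, and consider the random subgraph $H$ of $K_n$ consisting of edges $e$ with $c_i(e)\le x_i$ for every $i$. Since costs are independent, $H\sim G(n,p)$ with
\[
p=\prod_{i=1}^r\Pr(Z_E^{\b}\le x_i)=\Theta\brac{\D^{1/\b}/n^{r/\b}}.
\]
The hypothesis on $\D$ will imply $np\gg\log n$, so $H$ has a perfect matching w.h.p.; let $M$ be the minimum weight one. Because weights are independent of costs, $M$ is distributed as the minimum weight perfect matching of $G(n,p)$ with i.i.d.\ $Z_E^{\a}$ edge weights, and an order-statistic analysis — the $k$th smallest of the $\Theta(n^2p)$ revealed weights is of order $(k/(n^2p))^{\a}$ and the min weight matching selects essentially the $n/2$ smallest along augmenting paths — yields $w(M)=O(n^{1-\a}/p^{\a})=O(n^{1+r\a/\b-\a}/\D^{\a/\b})$. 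To check the cost constraints, conditional on the graph $H$ the costs $(c_i(e))_{e\in H}$ are i.i.d.\ copies of $Z_E^{\b}$ truncated to $[0,x_i]$; since $M$ depends on the cost data only through the graph $H$, the $n/2$ matching edges carry an i.i.d.\ truncated sample with mean $x_i/(1+\b)$, so $\E[c_i(M)]=\e C_i/(1+\b)<C_i$ and Bernstein's inequality gives $c_i(M)\le C_i$ w.h.p.\ once $\e$ is small enough.

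The lower bound is a first moment computation. The number of perfect matchings of $K_n$ is $(n-1)!!\sim\sqrt{2}(n/e)^{n/2}$, and for each fixed $M$ the events $\{w(M)\le W\}$ and $\{c_i(M)\le C_i\}_{i=1}^r$ are mutually independent probabilities that sums of $n/2$ i.i.d.\ copies of $Z_E^{\a}$ or $Z_E^{\b}$ lie below their respective thresholds, to each of which Lemma \ref{lemB} applies. A direct calculation then shows that the expected number of feasible matchings with $w(M)\le W$ is $o(1)$ once $W$ drops below a small constant multiple of $n^{1+r\a/\b-\a}/\D^{\a/\b}$, giving the claimed lower bound.

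I expect the main technical obstacle to be the min-weight perfect matching estimate on the sparse random graph $G(n,p)$ used in the upper bound, particularly in the regime where $np$ is only slightly above $\log n$. In the bipartite case of Theorem \ref{th2} one can lean on standard results for random bipartite matchings, but in the non-bipartite case the analogous alternating-path / order-statistic argument has to be carried out directly for $G(n,p)$; one must additionally argue that the min weight matching so produced really uses only the smallest $O(n)$ edge weights so that the order-statistic bound above is tight, and that the resulting matching $M$ — although now a nontrivial function of $H$ — still allows the independence argument used to control $c_i(M)$ via Bernstein's inequality.
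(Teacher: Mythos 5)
Your lower bound is sound and is essentially the paper's: a first moment bound over the $n!/(m!2^m)$ perfect matchings of $K_n$ ($n=2m$), using the independence of weights and costs and applying Lemma \ref{lemB} $r+1$ times. The cost side of your upper bound is also fine, though over-engineered: once you restrict to edges with $c_i(e)\le 2\e C_i/n$ for all $i$, \emph{any} perfect matching satisfies $c_i(M)\le \e C_i$ deterministically, so the truncated-mean/Bernstein step is unnecessary (the paper simply thresholds at $C_i/n$ and uses this trivial bound).

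The genuine gap is the weight estimate in the upper bound. The assertion that the minimum-weight perfect matching of $G_{n,p}$ with i.i.d. $Z_E^\a$ weights has weight $O(n^{1-\a}/p^\a)$ is exactly the crux of the theorem, and the argument you offer for it is not a proof: the $n/2$ globally smallest of the $\Theta(n^2p)$ edge weights do not form a matching (they share endpoints and miss most vertices), so summing the first $n/2$ order statistics gives no upper bound on $w(M)$; the phrase ``selects essentially the $n/2$ smallest along augmenting paths'' is precisely the statement that needs proof, and you flag it yourself as an unresolved obstacle. For an upper bound one must \emph{exhibit} a low-weight perfect matching, and this is what the paper's construction supplies via Walkup's device: write $w(e)=\min\set{Z_1(e),Z_2(e)}$ where $Z_1,Z_2$ are independent copies of $Z_W$ with $\Pr(Z_W\ge x)^2=\Pr(Z_E^\a\ge x)$, attach one copy to each endpoint, and let $H$ consist of the two $Z_W$-cheapest edges at each vertex of the cost-thresholded graph. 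Then $H$ is distributed as $G_{2\text{-}out}$, which contains a perfect matching w.h.p. by Frieze's theorem \cite{F2out}; every edge of that matching is one of the two $Z_W$-cheapest at some vertex, each vertex has degree $\approx np$ w.h.p., and Corollary \ref{cor1} bounds the expected first and second minima of $\approx np$ copies of $Z_W$ by $O((np)^{-\a})$, giving expected total weight $O(n(np)^{-\a})=O(n^{1+r\a/\b-\a}/\D^{\a/\b})$, with concentration since the total is a sum of independent variables with exponential tails. Without this (or some equally rigorous substitute, e.g. a worked-out non-bipartite analogue of the bipartite matching analysis you allude to), your upper bound does not stand.
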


Now consider the Travelling Salesperson Problem (TSP). Let $\cT$ denote the set of Hamilton cycles in $K_n$ or the set of directed Hamilton cycles in $\vec{K}_n$.,

{\bf Constrained Travelling Salesperson Problem (CSTSP)} $Opt(\cT,\bC)$.
\begin{theorem}\label{th4}
If ${\D^{1/\b}}\gg n^{r-1}\log n$ and  and $C_i\leq n,\,i=1,2,\ldots,r$ then w.h.p.
\[
w^*(\bC)=\Theta\bfrac{n^{1+r\a/\b-\a}}{{\D^{\a/\b}}}.
\]
\end{theorem}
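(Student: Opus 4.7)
The plan is to prove Theorem \ref{th4} in close parallel with Theorem \ref{th3}, since a Hamilton cycle and a perfect matching both consist of $\Theta(n)$ edges, the two theorems have identical hypotheses on $\D$ and identical conclusion, and the only genuinely new element is the global Hamiltonicity requirement rather than a perfect-matching existence.

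For the lower bound I argue by averaging. Set $\theta_i=2rC_i/n$. Any feasible Hamilton cycle $H$ has at most $C_i/\theta_i=n/(2r)$ edges with $c_i(e)>\theta_i$, so by a union bound at least $n/2$ of its edges lie in the random edge-set $G^*=\{e\in E(K_n):c_j(e)\le\theta_j\text{ for every }j\}$. A Chernoff bound gives $|G^*|=\Theta(N)$ w.h.p., where $N:=n^2\prod_i\theta_i^{1/\b}\asymp n^{2-r/\b}\D^{1/\b}$. The weights $\{w(e):e\in G^*\}$ are i.i.d.\ $Z_E^\a$, independent of the event $\{e\in G^*\}$ because weights and costs are independent. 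A standard order-statistic estimate then shows that the sum of the $n/2$ smallest such weights is $\Theta\brac{N^{-\a}(n/2)^{1+\a}}=\Theta\brac{n^{1+r\a/\b-\a}/\D^{\a/\b}}$, which lower bounds $w(H)$, and a second Chernoff/Azuma-type concentration step makes the bound hold w.h.p.

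For the upper bound I would adapt the construction used for Theorem \ref{th3}. The template is: (i) restrict to a cost-truncated random subgraph $G$ of $K_n$, keeping each edge $e$ iff $c_i(e)\le KC_i/n$ for every $i$, with $K$ a sufficiently large constant; (ii) use random-graph theory to show that $G$ w.h.p.\ is Hamiltonian, and that its minimum-weight Hamilton cycle has weight of order $\Theta(n^{1+r\a/\b-\a}/\D^{\a/\b})$ -- the weights inside $G$ are still i.i.d.\ $Z_E^\a$, so the $n$ smallest such weights sum to that order by the same order-statistic estimate used in the lower bound; (iii) verify that the per-edge cost-truncation guarantees $c_i(H)\le n\cdot KC_i/n=KC_i$, up to constants below the budget $C_i$ once $K$ is rescaled appropriately. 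The classical Hamilton-cycle existence and min-weight Hamilton-cycle results in random graphs (Komlós--Szemerédi, Bollobás, Frieze) furnish step (ii) inside the sparse subgraph $G$, and the switch from $K_{n,n}$ to $K_n$ is handled exactly as in Theorem \ref{th3}.

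The main obstacle is step (ii): ensuring that the minimum-weight Hamilton cycle in the sparse cost-truncated subgraph $G$ has weight matching the target order without an extraneous $\mathrm{polylog}$ factor, and that the cycle-closing part of the random-graph argument (a Pósa-type rotation/patching step) uses only $o(n)$ extra edges whose combined weight remains $O(n^{1+r\a/\b-\a}/\D^{\a/\b})$ and whose additional contribution to each $c_i$-sum does not break the $i$-th budget. This is exactly the point where the matching proof of Theorem \ref{th3} -- in which ``closing'' an almost-perfect matching amounts only to pairing up a handful of leftover vertices -- needs the most substantive reworking, since for a single Hamilton cycle one must run a genuine rotation argument inside the cost-truncated random graph. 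Once that step is controlled, the order-statistic estimates used for the lower bound immediately deliver the matching upper bound.
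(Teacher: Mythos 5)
Your lower bound is fine and is actually a different (and arguably more robust) route than the paper's: the paper simply takes a union bound over all $n!$ Hamilton cycles, bounding $\Pr(w(\Lambda)\leq L)$ and $\Pr(c_i(\Lambda)\leq C_i)$ by Lemma \ref{lemB}, whereas your Markov-type budget argument ("at most $C_i/\theta_i$ edges of a feasible cycle can have $c_i(e)>\theta_i$") plus an order-statistics bound on the cheapest $n/2$ weights in the cost-truncated edge set gives the same order. Both work; just note that for the quantitative step you only need an upper bound on $|G^*|$, and that your claim that the sum of the $n/2$ smallest weights is $\Theta(\cdot)$ should be $\Omega(\cdot)$ (the upper-bound direction of that sum is irrelevant and, in any case, those edges need not form a cycle).

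The genuine gap is in your upper bound, precisely at the step you flag as "the main obstacle": asserting that the cost-truncated random graph $G$ contains a Hamilton cycle of weight $O(n^{1+r\a/\b-\a}/\D^{\a/\b})$ because "the $n$ smallest weights sum to that order" is not a proof -- the $n$ globally cheapest edges of $G$ do not form a Hamilton cycle, and a P\'osa rotation/patching argument run naively on cheap edges tends to cost you polylogarithmic factors (and forces the cost bookkeeping you worry about). The missing idea, which the paper uses for Theorems \ref{th2}--\ref{th4} alike, is Walkup's two-copy device combined with $k$-out Hamiltonicity: replace $w(e)$ by $\min\{Z_1(e),Z_2(e)\}$ with $Z_1,Z_2$ independent copies of $Z_W$ where $\Pr(Z_W\geq x)^2=\Pr(Z_E^\a\geq x)$, attach one copy to each endpoint, and let $H$ consist of the $3$ cheapest edges at each vertex (resp.\ the $2$ cheapest out- and in-edges in the directed case). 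Then $H$ is distributed as $G_{3-out}$ (resp.\ $D_{2-in,2-out}$), which is Hamiltonian w.h.p.\ by Bohman--Frieze (resp.\ Cooper--Frieze), and by Corollary \ref{cor1} each vertex contributes expected weight $O((np)^{-\a})$ with $p\asymp \D^{1/\b}/n^{r/\b}$, giving total expected weight $O(n^{1+r\a/\b-\a}/\D^{\a/\b})$ with no logarithmic loss; concentration follows since the weight is a sum of independent variables with exponential tails. The cost constraints need no separate rotation-time accounting because every edge of $G$ already satisfies $c_i(e)\leq C_i/n$ and the cycle has exactly $n$ edges. Incidentally, your description of the Theorem \ref{th3} proof as "pairing up a handful of leftover vertices" is not how the paper argues there either: it uses Frieze's theorem that $G_{2-out}$ has a perfect matching w.h.p., and it is exactly this $k$-out template that transfers verbatim to the TSP case and closes the gap you left open.
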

\section{Structure of the paper}
We prove the above theorems in their order of statement. The upper bounds are proved as follows: we consider the random graph $G_{n,p}$ (or bipartite graph $G_{n,n,p}$ or digraph $D_{n,p}$) for  suitably chosen $p$ associated with the random costs. We then seek minimum weight objects contained in these random graphs. The definition of $p$ is such that objects, if they exist, automatically satisfy the cost constraints. For minimum weight paths we adapt the methodology of \cite{Jan}. For the remaining problems we use theorems in the literature stating the high probability existence of the required objects when each vertex independently chooses a few (close) random neighbors.

In Section \ref{mgd} we consider more general distributions. We are able to extend the above theorems under some extra assumptions about the $C_i$.
\section{CSP}
\subsection{Upper Bound for CSP}
In the proof of the upper bound, we first consider weights $\hw(e)$ where the $\hw(e)$ are independent exponential mean one random variables. The costs will remain independent copies of $Z_E^\b$. We will then use Holder's inequality to obtain the final result.
\subsection{$\frac{\log^{2+r/\b}n}{n}\leq {\D^{1/\b}}$ and $C_i\leq 10\log n,\,i=1,2,\ldots,r$}\label{upper}
Suppose now that we let $L=10\log n$ and 
\[
E_0=\set{e:c_i(e)\leq \frac{C_i}{L},i=1,2,\ldots.r}.
\]
The proof in this case goes as follows:
\begin{enumerate}[(i)]
\item We search for short paths that only use edges in $E_0$ and note that the graph $([n],E_0)$ is distributed as $G_{n,p}$.
\item Observe that any path using fewer than $L$ edges of $E_0$ automatically satifies the cost constraints.
\item A simple calculation shows that w.h.p. the number of edges between a set $S$ of size $k$ and the remaining vertices is close to the expectation $k(n-k)p$ for all sets of vertices $S$, see \eqref{defES} and \eqref{cE}.
\item We run Dijkstra's algorithm for finding shortest (now minimum weight) paths from vertex 1. We use Janson's argument \cite{Jan} to bound the distance to the $m=n/3$ closest vertices $V_1$. We need the claim in item (iii) here.
\item We repeat (iv), starting from vertex set $n$, to obtain the $m=n/3$ closest vertices $V_2$. If $V_1\cap V_2\neq \emptyset$ we will have found a path of low enough weight, otherwise we calim that w.h.p. there will be a low enough weight edge joining $V_1,V_2$.
\item We then argue that the trees constructed by the Dijkstra algorithm are close to being Random Recursive Trees and we can easily bound their height. Showing that we can use item (ii).
\item We finally use Holder's inequality to switch from $\hw$ to $w$.
\end{enumerate}
We first bound the value of $p$.
\beq{defp}{
p=\Pr\brac{e\in E_0}=\prod_{i=1}^r\brac{1-\exp\set{-\bfrac{C_i}{3 L}^{1/\b}}},
}
where $e$ is an arbitrary edge. 

We note that if $0<x\leq 1$ then $x/2\leq 1-e^{-x}\leq x$. This implies that
\beq{defpp}{
\frac{\D^{1/\b}}{2^r(3L)^{r/\b}}\leq  p \leq \frac{\D^{1/\b}}{(3L)^{r/\b}}.
}
We consider the random graph $G_{n,p}$ where edges have weight given by $\hw$ and costs $c_i(e)\leq C_i/3L,i=1,2,\ldots,r$. We modify Janson's argument \cite{Jan}.

We now deal with item (iii). We observe that w.h.p. for every set $S$ of size $k$, $e(S:\bar{S})\approx k(n-k)p$ where $e(S:T)$ is the number of edges $\set{v,w}$ with one end in $S$ and the other in $T$. We only need to check the claim for $|S|\leq n/2$. Let $\e=\frac{1}{\log^{1/3}n}$ and 
\beq{defES}{
\cE_S=\set{e(S:\bar{S})\notin (1\pm\e)k(n-k)p}\text{ and }\cE=\bigcup_{|S|\leq n/2}\cE_S. 
}
Then, using the Chernoff bounds for the binomial distribution,
\beq{cE}{
\begin{split}
\Pr(\cE)&\leq \sum_{k=1}^{n/2}\binom{n}{k}\Pr(Bin(k(n-k),p)\notin (1\pm\e)k(n-k)p)\\
&\leq 2\sum_{k=1}^{n/2}\bfrac{ne}{k}^ke^{-\e^2k(n-k)p/3}\\
&=2\sum_{k=1}^{n/2}\bfrac{ne^{1-\e^2(n-k)p/3}}{k}^k\\
&\leq 2\sum_{k=1}^{n/2}\bfrac{ne^{-\Omega(\log^{4/3}n)}}{k}=o(1).
\end{split}
}
We now continue with item (iv). We set $S_1=\set{1}$ and $d_1=0$ and consider running Dijkstra's algorithm \cite{Dijk}. At the end of Step $k$ we will have computed $S_k=\set{1=v_1,v_2,\ldots,v_k}$ and $0=d_1,d_2,\ldots,d_k$ where $d_i$ is the minimum weight of a path from 1 to $i,i=1,2,\ldots,k$. Let there be $\n_k$ edges from $S_k$ to $[n]\setminus S_k$. Arguing as in \cite{Jan} we see that $d_{k+1}-d_k=Z_k$ where $Z_k$ is the minimum of $\n_k$ independent exponential mean one random variables. Also, the memoryless property of the exponential distribution implies that $Z_k$ is independent of $d_k$. It follows that for $k<n/2$,
\mult{meank0}{
\E(d_k\mid\neg\cE)=\E\brac{\sum_{i=1}^k\frac{1}{\n_i}\bigg|\neg\cE} =\sum_{i=1}^k\frac{1+o(1)}{i(n-i)p} =\frac{1+o(1)}{np}\sum_{i=1}^k\brac{\frac{1}{i}+\frac{1}{n-i}}\\
=\frac{1+o(1)}{np}\brac{H_k+H_{n-1}-H_{n-k+1}},
}
where $H_k=\sum_{i=1}^k\frac{1}i$.

By the same token,
\beq{vark0}{
\Var(d_k\mid\neg\cE)=\sum_{i=1}^k\Var(Z_i\mid\neg \cE)= \sum_{i=1}^k\frac{1+o(1)}{(i(n-i)p)^2}=O((np)^{-2}).
}
We only pursue the use of Dijkstra's algoritm from vertex 1 for $m=n/3$ iterations. It follows from \eqref{meank0} and \eqref{vark0} and the Chebyshev inequality that we have w.h.p.
\beq{dn}{
d_{m}\approx \frac{\log n}{np}. 
}
We next deal with item (vi). The tree built by Dijkstra's algorithm is (in a weak sense) close in distribution to a random recursive tree i.e. vertex $v_{k+1}$ attaches to a near uniformly random member of $\set{v_1,v_2,\ldots,v_k}$. Indeed, assuming $\cE$ does not occur, 
\[
\Pr(v_{k+1}\text{ attaches to }v_i)=\frac{e(v_i:\bar{S}_k)}{\n_k}\leq \frac{(1+\e)(n-1)p}{(1-\e)k(n-k)p}.
\]
Hence, if $T$ is the tree constructed in the first $m$ rounds of Dijkstra's algorithm, then 
\beq{height}{
\begin{split}
\Pr(height(T)\geq L)&\leq \sum_{1<t_1<\cdots<t_L<m}\prod_{i=1}^L\frac{3(1+\e)}{2(1-\e)t_i}\\
&\leq \frac{1}{L!}\bfrac{3(1+\e)}{2(1-\e)}^L\brac{\sum_{i=1}^n\frac{1}{i}}^L\\
&\leq \bfrac{3(\log n+1)e^{1+o(1)}}{2L}^L=o(1).
\end{split}
}
It follows from \eqref{defpp},  \eqref{dn} and \eqref{height}  that w.h.p., for every $v\in V_1=S_{m}$,  there exists a path $P$ from 1 to $v$ of weight at most 
\[
\l\approx\l_0=\frac{\log n}{np}\lesssim\frac{30^{r/\b}\log^{r/\b+1}n}{n\Upsilon^{1/\b}}
\]
and costs $c_i(P)\leq LC_i/3L\leq C_i/3$.\footnote{Here we write $A=A(n)\lesssim B=B(n)$ if $A\leq (1+o(1))B$.}

We now deal with item (v). We next consider applying Dijkstra's algorithm to find a minimum weight path from vertex $n$ to other vertices. Using the same argument as above, we see that we can find $m$ vertices $V_2$ that are within distance $\l_0$ of vertex $n$. If $V_1\cap V_2\neq\emptyset$ then we have found a path of weight at most $2\l_0$ between vertex 1 and vertex $n$.

If $V_1,V_2$ are disjoint then w.h.p. there is an edge of weight $20/np$ between them. Indeed,
\[
\Pr(\exists V_1,V_2\text{ with no such edge})\leq \binom{n}{m}^2(e^{-20/np})^{n^2/9}=o(1).
\]
This yields a path $P$ with
\begin{align}
\hw(P)&\leq 2\l_0+\frac{20}{np}\leq\frac{3\cdot 30^{r/\b}\log^{r/\b+1}n}{n\Upsilon^{1/\b}}.\label{eq1}\\
c_i(P)&\leq \frac{2C_i}{3}+\frac{C_i}{3}= C_i,\quad i=1,2,\ldots,r.\label{eq2}
\end{align}
(Here we have used $C_i\geq {\D^{1/\b}}/L^{r-1}\gg p$.)

We now deal with item (vii). We use Holder's inequality to yield
\beq{Hold1}{
w(P)=\sum_{e\in P}\hw(e)^\a\leq \brac{\sum_{e\in P}\hw(e)}^\a L^{1-\a}= O\bfrac{\log^{r\a/\b+1}n}{n^\a\D^{\a/\b}}.
}
This completes the proof of Theorem \ref{th1} for this case. 
\subsection{$\frac{3\om\log^{r/\b}n}{n}\leq {\D^{1/\b}}\leq \frac{\log^{r/\b+2} n}{n}$  and $C_i\leq 10\log n,\,i=1,2,\ldots,r$}\label{smalldelta}
The proof is similar to that of Section \ref{upper}, but requires some changes in places. The problem is that we cannot now assume the non-occurrence of $\cE$. Other than this, the proof will follow the same strategy. Our problem therefore is to argue that w.h.p. $e(S_k:\bar{S}_k)$ is sufficiently large.
\begin{enumerate}[(a)]
\item We now have to keep track of the size of  $e(S_k:\bar{S}_k)$ as a random process. This is equation \eqref{l0}.
\item The term $\eta_k$ is the number of edges between $v\notin S_k$ and $S_k$. We don't want this to be large, as it reduces $e(S_{k+1}:\bar{S}_{k+1})$. So, we do not add vertices to $S_k$ if $\eta_k\geq 2np$, which only happends rarely.
\item Finally, we have to work harder in the case where $V_1,V_2$ are disjoint. We need to use edges of slightly higher cost in order to get a low weight edge in $e(V_1:V_2)$.
\end{enumerate}

Let $p$ be as in \eqref{defp} where $L=20\log n$. Note that from \eqref{defpp} we see that
\[
p\leq \frac{\log^2n}{n}.
\]

We again consider the random graph $G_{n,p}$ where edges have weight given by $\hw$ and costs at most $C_i/3L$ and again modify Janson's argument \cite{Jan}. We also restrict our search for paths, avoiding vertices of high degree.

We set $S_1=\set{1}$ and $d_1=0$. At the end of Step $k$ we will have computed $S_k=\set{1=v_1,v_2,\ldots,v_k}$ and $0=d_1,d_2,\ldots,d_k$ where $d_i$ is the minimum weight of a path  from 1 to $i,i=1,2,\ldots,k$. Let there be $\n_k$ edges from $S_k$ to $[n]\setminus S_k$. We cannot rely on $\cE$ of \eqref{cE} not to occur and so we need to modify the argument here. 

{\bf Assumption: $1\leq k\leq n_0=1/3p$}\\
{\bf Modification:} if our initial choice $v$ for $v_{k+1}$ satisfies $e(v:\bar{S}_k)\geq 2np$ then we reject $v$ permanently from the construction of paths from vertex 1.

The initial aim is roughly the same, we want to show that w.h.p. 
\beq{l0}{
\sum_{\ell\leq k}\n_\ell\geq (1-o(1))knp.
}
For $v\notin S_k$, let $\eta_{k,v}=e(S_k:\set{v})$ and $\eta_k=\eta_{k,v_{k+1}}$. Then, w.h.p.
\beq{nuk}{
\n_{k+1}\geq \n_k-\eta_{k}+B_k\text{ where }B_k=Bin(n_1,p)1_{Bin(n,p)\leq 2np},
}
where $n_1=n-2n_0$.

The binomials are independent here. This is because the edges between $v_{k+1}$ and $\bar{S}_k$ have not been exposed by the algorithm to this point. The number of trials $n_1$ comes from the following: we know from the Chernoff bounds  that
\beq{pq}{
\Pr(Bin(n,p)\geq 2np)\leq e^{-np/3}.
}
It follows from the Markov inequality that w.h.p. there are at most $ne^{-np/4}$ instances where the modification is invoked. This means that w.h.p. the initial choice for $v_k$ has at least $n-n_0-ne^{-np/4}\geq n_1$ possible neighbors. We now define
\[
S_k=\sum_{\ell=1}^kB_k.
\]
We need a lower bound for $B_k$ and an upper bound for $\eta_k$.  We next observe that if 
\[
\e=(np)^{-1/3}
\] 
then
\beq{l1}{
\Pr(B_k\leq (1-\e)np)=\Pr(Bin(n_1,p)\geq 2np)+\Pr(Bin(n_1\leq (1-\e)np))\leq (1+o(1))e^{-\e^2np/3}.
}
It follows that if $k_0=\min\set{n_0,e^{\e^2np/4}}$ then w.h.p.
\beq{l2}{
\Pr(\exists 0\leq k\leq k_0: B_k\leq (1-\e)np)\leq (1+o(1))k_0e^{-\e^2np/3}\leq e^{-\e^2np/12}.
}
For $k\geq k_0$, we use the fact that $S_k$ is the sum of bounded random variables. Hoeffding's inequality \cite{Hoef} gives that 
\[
\Pr(S_k\leq \E(S_k)-t)\leq \exp\set{-\frac{2t^2}{4kn^2p^2}}.
\]
Now $\E(B_k)\geq (1-\e)np$ and so putting $t=k^{2/3}np$ we see that 
\[
\Pr(S_k\leq (1-\e)knp-k^{2/3}np)\leq e^{-k^{1/3}/2}.
\]
So
\beq{l3}{
  \Pr(\exists k\geq k_0:\;S_k\leq (1-\e)knp-k^{2/3}np)\leq \sum_{k\geq k_0}e^{-k^{1/3}/2}=o(1).
}

We next observe that 
\begin{align}
\Pr(\exists S:\;|S|=s\leq 1/3p,e(S:S)\geq s+r)&\leq \sum_{s=1}^{1/3p} \binom{n}{s} \binom{s(s-1)/2}{s+r}p^{s+r}\nonumber\\
&\leq \sum_{s=1}^{1/3p} \bfrac{e^2np}{2}^s \bfrac{sep}{2}^r.\label{Eq2}
\end{align}
Putting $r=s(np)^{1/2}$, the RHS of \eqref{Eq2} becomes
\[
\sum_{s=1}^{1/3p}\brac{\frac{e^2np}{2}\bfrac{sep}{2}^{(np)^{1/2}}}^s\leq \sum_{s=1}^{1/3p}\brac{\frac{e^2np}{2}\bfrac{e}{6}^{(np)^{1/2}}}^s=o(1).
\]
It follows that w.h.p.,
\beq{tt}{
\sum_{\ell=1}^k\eta_\ell=e(S_k)\leq 2((np)^{1/2}+1)k.
}
It then follows from  \eqref{nuk} and \eqref{l2} and \eqref{l3} and  \eqref{tt} that w.h.p.
\beq{case1}{
\n_k\geq (1-o(1))knp-2((np)^{1/2}+1)k\geq (1-o(1))knp.
}
Arguing as in \cite{Jan} we see that $d_{k+1}-d_k=Z_k$ where $Z_k$ is the minimum of $\n_k$ independent exponential mean one random variables. Also, $Z_k$ is independent of $d_k$. It follows that for $k<n$,
\beq{meank}{
\E(d_k)=\E\brac{\sum_{i=1}^k\frac{1}{\n_i}} \leq \sum_{i=1}^k\frac{1+o(1)}{inp} =\frac{1+o(1)}{np}\sum_{i=1}^k\frac{1}{i}=\frac{1+o(1)}{np} H_k,
}
where $H_k=\sum_{i=1}^k\frac{1}i$.

By the same token,
\beq{vark}{
\Var(d_k)=\sum_{i=1}^k\Var(Z_i)=\sum_{i=1}^k\frac{1+o(1)}{(inp)^2}=O((np)^{-2}).
}
It follows from \eqref{meank} and \eqref{vark} and the Chebyshev inequality that w.h.p. we have
 $d_{n_0}\lesssim\frac{\log n}{np}$. Let $V_1$ denote the $n_0$ vertices at this distance from vertex 1.

We next consider applying Dijkstra's algorithm to find a minimum weight path from vertex $n$ to other vertices. Using the same argument as above, we see that we can find $n_0$ vertices $V_2$ that are within distance $\frac{(1+o(1))\log n}{np}$ of vertex $n$. If $V_1\cap V_2\neq\emptyset$ then we have found a path of weight at most $\frac{(2+o(1))\log n}{np}$ between vertex 1 and vertex $n$.

If $V_1,V_2$ are disjoint then we will use the edges 
\[
E_1=\set{e:c_i(e)\in\left[\frac{C_i}{L},\frac{2C_i}{L}\right],i=1,2,\ldots.r}.
\]
Given $e=\set{x,y}\in V_1:V_2$, then given the history of Dijkstra's algorithm so far, either $e\in E_0$ or 
we can say that
\beq{E1}{
\Pr(e\in E_1\mid e\notin E_0)\geq \Pr(e\in E_1)=(1-e^{-(2^{1/\b}-1)p^{1/\b}})^r.
}
For the equation in \eqref{E1} we use
\mult{p2p}{
\Pr(p\leq Z_E^\b\leq 2p)=\Pr(Z_E^\b\geq p)(1-\Pr(Z_E^\b\geq 2p\mid Z_E\geq p))=\\
e^{-p^{1/\b}}\brac{1-\frac{\Pr(Z_E^\b\geq 2p)}{\Pr(Z_e^\b\geq p)}}=e^{-p^{1/\b}}\brac{1-e^{-(2^{1/\b}-1)p^{1/\b}}}=e^{-p^{1/\b}}-e^{-(2p)^{1/\b}}\geq \frac{(2^{1/\b}-1)p^{1/\b}}{2}.
}
For the inequality in \eqref{p2p} we use the fact that we now have $p\leq \frac{\log^2n}{n}$.

Then we search for an edge in $E_2=\set{e\in E_1:\hw(e)\leq 1/np}$. And, 
\[
\Pr(E_2\cap(V_1:V_2)=\emptyset)\leq \brac{1-\brac{\frac{(2^{1/\b}-1)p^{1/\b}}{2}}(1-e^{-1/np})}^{1/9p^2}\\
\leq \brac{1-\frac{(2^{1/\b}-1)p^{1/\b}}{2np}}^{1/9p^2}=o(1).
\]
This yields a path of weight at most $\frac{(2+o(1))\log n}{np}+\frac{1}{np}=\frac{(2+o(1))\log n}{np}$. 

We deal with the height of the Dijkstra trees. Let $T$ be the tree constructed by Dijkstra's algorithm and let $\xi_i,i\leq k$ denote the number of edges from $v_i$ to $V_1\setminus S_i$.
\begin{align*}
\Pr(height(T)\geq L)&\leq \E\brac{\sum_{1<t_1<\cdots<t_L<n_0}\prod_{i=1}^L\frac{\xi_{t_i}}{\n_{t_{i+1}-1}}}\\
&\leq \E\brac{\sum_{1<t_1<\cdots<t_L<n_0}\prod_{i=1}^L\frac{2np}{\n_{t_{i+1}-1}}}\\
&\leq \E\brac{\frac{1}{L!}\brac{\sum_{i=1}^{n_0}\frac{2np}{\n_i}}^L}\\
&\leq o(1)+\frac{(2enp)^L}{(np)^LL!}\brac{\sum_{i=1}^{n_0}\frac{1+o(1)}{i}}^L\\
\noalign{\text{The first $o(1)$ term here is the probability that there is a small $\n_k$ and this is covered by \eqref{case1}.}}\\
&=o(1),
\end{align*}
since $L\geq 20\log n$.

It follows from the above that w.h.p. there exists a path $P$
\beq{final}{
\text{where } \hw(P)\lesssim \frac{2\log n}{np}\text{ and }c_i(P)\leq \frac{(2L+2)C_i}{3L}<C_i,i=1,2,\ldots,r.
}
Arguing as for \eqref{Hold1} we see that
\beq{Hold2}{
w(P)\leq \hw(P)^\a L^{1-\a}= O\bfrac{\log^{r\a/\b+1}n}{n^\a\D^{\a/\b}}.
}
\subsection{Lower Bound for CSP}\label{lowcsp}
This is a straightforward use of the first moment method. Suppose that  
\[
{\D^{1/\b}}=\frac{\om\log^{r/\b}n}{n},\quad L=\frac{\e\log^{r\a/\b+1}n}{n^\a\D^{\a/\b}},
\] 
where 
\[
\e=\brac{\a\b^r e^{-2}\brac{10\brac{\frac{r}{\b}+\frac{1}{\a}}}^{-(r/\b+1/\a)}}^\a,
\]
then 
\begin{align}
&\Pr\brac{\exists P:w(P)\leq L,c_i(P)\leq C_i,i=1,2,\ldots,r}\nonumber\\
&\leq  \sum_{k=1}^{n-2}n^{k-1} \bfrac{L^{k/\a}}{\a^kk!k^{k(1/\a-1)}}\prod_{i=1}^r\frac{C_i^{k/\b}}{\b^kk!k^{k(1/\b-1)}} \label{lowexp}\\
&\leq \frac{1}{n}\sum_{k=1}^{n-1}\brac{n\cdot\frac{e\e^{1/\a}\log^{r/\b+1/\a}n}{\a nk^{1/\a}\D^{1/\b}}\cdot \frac{e\D^{1/\b}}{\b^r k^{r/\b}}}^k\nonumber\\
&=\frac{1}{n}\sum_{k=1}^{n-1}\bfrac{e^2\e^{1/\a}\log^{r/\b+1/\a}n}{\a\b^rk^{r/\b+1/\a}}^k\nonumber\\
&=\frac{1}{n}\sum_{k=1}^{\frac12\log n}\bfrac{e^2\e^{1/\a}\log^{r/\b+1/\a}n}{\a\b^rk^{r/\b+1/\a}}^k+ \frac{1}{n}\sum_{k=\frac12\log n}^{n-1}\bfrac{e^2\e^{1/\a}\log^{r/\b+1/\a}n}{\a\b^rk^{r/\b+1/\a}}^k\nonumber\\
&\leq \frac{1}{n}\sum_{k=1}^{\frac12\log n}\bfrac{\log n}{10(r/\b+1/\a)k}^{(r/\b+1/\a)k}+ \frac{1}{n}\sum_{k=\frac12\log n}^{n-1}10^{-k}\nonumber\\
&=o(1).\nonumber
\end{align}
{\bf Explanation for \eqref{lowexp}}: we choose a path of length $k$ from 1 to $n$ in at most $n^{k-1}$ ways.  Then we use Lemma \ref{lemB} $r+1$ times. Then we use the union bound. 
\section{Upper Bounds}
\subsection{Upper Bound for CAP}\label{CAP}

Let $G$ denote the subgraph of $K_{n,n}$ induced by the edges that satisfy $c_i(e)\leq C_i/n$ for $i=1,2,\ldots,r$. Let 
\[
p=\Pr\brac{c_i(e)\leq \frac{C_i}{n},i=1,2,\ldots,r}=\prod_{i=1}^r\brac{1-\exp\set{-\bfrac{C_i}{n}^{1/\b}}}.
\] 
and note that 
\[
\frac{\log n}{n}\ll \frac{\D^{1/\b}}{2^rn^{r/\b}}\leq p\leq \frac{\D^{1/\b}}{n^{r/\b}}.
\]
The approach for this and the remining problems is
\begin{enumerate}[(i)]
\item Look for a small weight structure in an edge weighted random graph $G$. In this case the random bipartite graph $G_{n,n,p}$.
\item Use an idea of Walkup \cite{W1} to construct a random subgraph $H$ of $G$ that only uses edges of low weight.
\item Use a result from the literature that states that w.h.p. the edges of $H$ contain a copy of the desired structure.
\end{enumerate}
$G$ is distributed as $G=G_{n,n,p}$. Note that by construction, a perfect matching $M$ of $G$ satisfies $c_i(M)\leq C_i,i=1,2,\ldots,r$. 

Let $d=np$ and note that because $dnp\gg\log n$ the Chernoff bounds imply that w.h.p. every vertex has degree $\approx d$. Now each edge of $G$ has a weight uniform in $[0,1]$. Following Walkup \cite{W1} we replace $w(e),e=(x,y)$ by $\min\set{Z_1(e),Z_2(e)}$ where 
\beq{defZ1}{
\text{$Z_1,Z_2$ are independent copies of $Z_W$ where $\Pr(Z_W\geq x)^2=\Pr(Z_E^\a\geq x)$.}
} 
We assign $Z_1(e)$ to $x$ and $Z_2(e)$ to $y$. 

Let $X,Y$ denote the bipartition of the vertices of $G$. Now consider the random bipartite graph $H$ where each $x\in X$ is incident to the two $Z_1$-smallest edges incident with $x$. Similarly, $y\in Y$ is incident to the two $Z_2$-smallest edges incident with $y$. Walkup \cite{W2} showed that $H$ has a perfect matching w.h.p.  The expected weight of this matching is asymptotically at most
\beq{capexp}{
\bfrac{2^\a n}{d^\a}\brac{\G\brac{1+\frac{1}{\a}}+\G\brac{2+\frac{1}{\a}}}\times \frac12= O\bfrac{n^{1+r\a/\b-\a}}{\D^{\a/\b}}.
}
This follows from (i) the the expression given in Corollary \ref{cor1} for the expected minimum and second minimum of $d$ copies of $Z$ and (ii) the matching promised in \cite{W2} is equally likely to select a minimum or a second minimum weight edge.

The selected matching is the sum of independent random variables with exponential tails and so will be concentrated around its mean.
\subsection{Upper Bound for CMP}
We let $p,d$ be as in Section \ref{CAP}. We replace Walkup's result \cite{W2} by Frieze's result \cite{F2out} that the random graph $G_{2-out}$ contains a perfect matching w.h.p. The random graph $G_{k-out}$ has vertex set $[n]$ and each vertex $v\in [n]$ independently chooses $k$ random edges incident with $v$. We again replace $c(e),e=(x,y)$ by $\min\set{Z_1(e),Z_2(e)}$ where $Z_1,Z_2$ are independent copies of $Z_W$ and associate one copy with each endpoint of the edge. We consider the random graph $H$ where each $v\in [n]$ is incident to the two $Z_W$-smallest edges incident with $x$. This is distributed as $G_{2-out}$ and we obtain an expression similar to that in \eqref{capexp}.

We have concentration around the mean as in Section \ref{CAP}.
\subsection{Upper bound for CSTSP/CATSP}
For the symmetric case we replace $w(e),e=\set{x,y}$ by $\min\set{Z_1(e),Z_2(e)}$ for each edge of $K_n$ and for the asymmetric case we replace $w(e),e=(x,y)$ by $\min\set{Z_1(e),Z_2(e)}$ for each directed edge of $\vec{K}_n$. In both cases we associate one copy of $Z_W$ to each endpoint of $e$. We define $p,d$ as in Section \ref{CAP} and consider either the random graph $G_{n,p}$ or the random digraph $D_{n,p}$.

For the symmetric case, we consider the random graph $H$ that includes the 3 cheapest edges associated with each vertex, cheapest with respect to $Z_W(e)$. This will be distributed as $G_{3-out}$ which was shown to be Hamiltonian w.h.p. by Bohman and Frieze \cite{BF}. For the asymmetric case, we consider the random digraph $H$ that includes the 2 cheapest out-edges and the 2 cheapest in edges associated with each vertex, cheapest with respect to $Z_W(e)$. This will be distributed as $D_{2-in,2-out}$ which has vertex set $[n]$ and where each vertex $v$ independently chooses 2 out- and in-neighbors. The random digraph $D_{2-in,2-out}$ was shown to be Hamiltonian w.h.p. by Cooper and Frieze \cite{CF}. 

The expected weight of the tour promised by \cite{BF} or by \cite{CF} is asymptotically $O(n^{1+r\a/\b-\a}/\D^{\a/\b})$ as in Section \ref{CAP}. We have concentration around the mean as in Section \ref{CAP}.

\section{Lower Bounds}
We proceed as in Section \ref{lowcsp}. Suppose that $\D=\om n^{r/\b-1}\log n$ and $L=\frac{\e n^{1+r\a/\b-\a}}{\D^{\a/\b}}$ where $\e$ will be a sufficnetly small constant. Let $\Lambda$ denote the relevant structure, matching or cycle. Then, by the union bound and Lemma \ref{lemB}, we have for CAP,CSTSP,CATSP,
\multstar{
\Pr\brac{\exists \Lambda : w(\Lambda )\leq L\text{ and }c_i(\Lambda )\leq C_i,i=1,2,\ldots,r}\leq 
 n!\cdot\frac{L^{n/\a}}{\a^nn!n^{n(1/\a-1)}}\cdot \prod_{i=1}^r\frac{C_i^{n/\b}}{\b^nn!n^{n(1/\b-1)}}\\
\leq \brac{\frac{\e^{1/\a}n^{1/\a+r/\b-1}}{\a n^{1/\a-1}\D^{1/\b}}\cdot \frac{e^r\D^{1/\b}}{\b n^{r/\b}}}^n=o(1),
}
for $\e$ sufficiently small.

For CMP, assuming that $n=2m$,
\multstar{
\Pr\brac{\exists \Lambda : w(\Lambda )\leq L\text{ and }c_i(\Lambda )\leq C_i,i=1,2,\ldots,r}\leq 
 \frac{n!}{m!2^m}\cdot\frac{L^{m/\a}}{\a^mm!m^{m(1/\a-1)}}\cdot \prod_{i=1}^r\frac{C_i^{m/\b}}{\b^mm!m^{m(1/\b-1)}}\\
\leq \brac{\frac{\e^{1/\a}m^{1/\a+r/\b-1}}{2\a m^{1/\a-1}\D^{1/\b}}\cdot \frac{e^r\D^{1/\b}}{\b m^{r/\b}}}^m=o(1),
}
for $\e$ sufficiently small.

\section{More general distributions}\label{mgd}
We follow an argument from Janson \cite{Jan}. We will asssume that $w(e),$ has the distribution function $F_w(t)=\Pr(X\leq t)$, of a random variable $X$, that satisfies $F_w(t)\approx at^{1/\a},\a\leq 1$ as $t\to 0$. For the costs $c_i(e)$ we have $F_c(t)\approx bt^{1/\b},\b\leq 1$. The constants $a,b>0$ can be dealt with by scaling and so we assume that $a=b=1$ here. For a fixed edge and say, $w(e)$, we consider random variables $w_<(e),w_>(e)$ such that $w_<(e)$ is distributed as $Z_E^{\a+\e_n}$ and $w_>(e)$ is distributed as $Z_E^{\a-\e_n}$, where $\e_n=1/10\log n$. (This choice of $\e_n$ means that $n^{\a+\e_n}=e^{1/10}n^\a$.)  Then let $U(e)$ be a uniform $[0,1]$ random variable and suppose that $X$ has the distribution $F^{-1}(U)$. We couple $X,w_<,w_>$ by generating $U(e)$ and then $w_<(e)=F_<^{-1}(U)=\log\bfrac{1}{1-u}^{\a-\e_n}$ and $F_>$ is defined similarly. The coupling ensures that $w_<(e)\leq w(e)\leq w_>(e)$ as long as $w(e)\leq\e_n$.

Given the above set up, it only remains to show that w.h.p. edges of length $w(e)>\e_n$ or cost $c_i(e)>\e_n$ are not needed for the upper bounds proved above. We can ignore the lower bounds, because they only increase if we exclude long edges. 

{\bf Assumptions for CMWP.} For the minimum weight path problem we will assume that $\D^{1/\b}\gg \frac{\log^{1+r/\b}n}{n}$, which is a $\log n$ factor larger than required for Theorem \ref{th1}. We will assume that $C_i=o(1)$ and then we only use edges of cost of order $C_i/\log n\ll \e_n$.

Observe that the minimum weight of a path from 1 to $n$ is at most $\frac{4\log n}{np}$ w.h.p. and this is less than $\e_n$ because of the assumption $\frac{\log^{1+r/\b}n}{n}\ll {\D^{1/\b}}$ and the definition of $p$ (see \eqref{defpp}). 

{\bf Assumptions for the other problems,}  We deal with costs by assuming that $C_i=o(n/\log n),i=1,2,\ldots r$. It is then a matter of showing that w.h.p. the first few order statistics of $Z_W$ are very unlikely to be greater than $\e_n$. ($Z_W$ is defined in \eqref{defZ1}.) But in all cases this can be bounded as follows: let $W_1,W_2,\ldots,W_m, m\geq n/2$ be independent copies of $Z_W$. Then,
\[
\Pr(|\set{i:W_i\leq \e_n}|\leq 3)\leq m^3(1-(1-e^{-\e_n^{1/\a}})^{1/2})^{m-3}=m^3e^{-m^{1-o(1)}}.
\] 
This bounds the probability of using a heavy edge at any one vertex and inflating by $n$ gives us the result we need.
\section{Conclusion}
We have given upper and lower bounds that hold w.h.p. for constrained versions of some classical problems in Combinatorial Optimization. They are within a constant factor of one another, unlike the situation with respect to spanning trees and arborescences, \cite{FT1}, \cite{FT2}, where the upper and lower bounds are asymptotically equal. It is a challenge to find tight bounds for the problems considered in this paper and to allow correlation between length and cost.

We have not made any claims about $\E(w^*(\bC))$ because there is always the (small) probability that the problem is infeasible. It is not difficiult to similarly bound the expectation conditional on feasibility.

\appendix

\section{Auxilliary Lemmas}\label{2min}
\begin{lemma}\label{lemA}
Let $\alpha > 0$ and let $Y_1, Y_2, \dots$ be i.i.d. copies of $Z=Z_E^\a$. For a positive integer $m$ and $1 \leq k \leq m$, let $X_m^{(k)}$ be the $k$th minimum of $Y_1, \dots, Y_m$. Then
\[
\E X_m^{(k)} = \Gamma\left(1+\a\right)\sum_{j=0}^{k-1}\sum_{i=0}^{j} \binom{m}{j}\binom{j}{i}(-1)^{i}(m+i-j)^{-\alpha}.
\]
In particular, if $k$ is a constant as $m \to \infty$, then
\[
\E X_m^{(k)} \approx \frac{1}{(k-1)!}\Gamma\left(k+\frac{1}{\alpha}\right)m^{-\alpha}.
\]
\end{lemma}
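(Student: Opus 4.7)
The plan is to reduce to exponential order statistics, compute the tail by a standard inclusion, and extract the asymptotic from a finite-difference expansion. Since $x\mapsto x^\alpha$ is monotone increasing on $[0,\infty)$, the $k$th smallest of $Y_i=Z_{E,i}^\alpha$ equals $(U_m^{(k)})^\alpha$, where $U_m^{(k)}$ is the $k$th smallest of the underlying exponentials $Z_{E,1},\ldots,Z_{E,m}$. So the task reduces to computing $\E(U_m^{(k)})^\alpha$.

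Since $\{U_m^{(k)}>t\}$ is precisely the event that fewer than $k$ of the $Z_{E,i}$ fall below $t$, we have
\[
\Pr(U_m^{(k)}>t)=\sum_{j=0}^{k-1}\binom{m}{j}(1-e^{-t})^{j}e^{-(m-j)t}.
\]
Next I would apply the tail formula $\E V^\alpha=\alpha\int_0^\infty t^{\alpha-1}\Pr(V>t)\,\dd t$ with $V=U_m^{(k)}$, expand $(1-e^{-t})^j=\sum_{i=0}^j\binom{j}{i}(-1)^i e^{-it}$, and evaluate each resulting integral as a Gamma integral,
\[
\int_0^\infty \alpha t^{\alpha-1}e^{-(m-j+i)t}\,\dd t=\Gamma(1+\alpha)(m-j+i)^{-\alpha}.
\]
Interchanging sums and integrals (justified by positivity of the tail and the finiteness of the outer sum) and reassembling yields the stated closed form exactly.

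For the asymptotic, with $k$ fixed and $m\to\infty$, I observe that the inner sum $\sum_{i=0}^j\binom{j}{i}(-1)^i(m-j+i)^{-\alpha}$ is a $j$th forward difference of $x\mapsto(m-j+x)^{-\alpha}$ evaluated at $x=0$, which by Taylor's theorem equals $\frac{\Gamma(\alpha+j)}{\Gamma(\alpha)}m^{-\alpha-j}(1+O(m^{-1}))$. Since $\binom{m}{j}\sim m^j/j!$, the $j$th term contributes $\frac{\Gamma(\alpha+j)}{j!\,\Gamma(\alpha)}m^{-\alpha}(1+o(1))$; summing over $j=0,\ldots,k-1$ and applying the hockey-stick identity $\sum_{j=0}^{k-1}\binom{\alpha+j-1}{j}=\binom{\alpha+k-1}{k-1}$ collapses the sum to the single leading term at order $m^{-\alpha}$.

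The only real obstacle is the bookkeeping in this last step: one must verify that the Taylor remainder of the $j$th finite difference is uniformly of lower order than the leading piece, over the finitely many values $j<k$, which is standard once the remainder is written with an explicit $(j{+}1)$st derivative bound. Nothing in the proof requires more than the tail integral representation of a positive moment and elementary order-of-magnitude estimates.
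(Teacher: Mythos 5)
Your proof is correct. For the exact formula you pass to the exponential order statistic $U_m^{(k)}$ and use the moment identity $\E V^\alpha=\alpha\int_0^\infty t^{\alpha-1}\Pr(V>t)\,\dd t$, whereas the paper integrates the tail of $X_m^{(k)}=(U_m^{(k)})^\alpha$ directly using $\Pr(Y_1>t)=e^{-t^{1/\alpha}}$; these are the same computation up to the substitution $t\mapsto t^{\alpha}$, and both arrive at the double sum via the binomial expansion and a Gamma integral. Where you genuinely diverge is the asymptotic step: the paper disposes of it in one line (``write $(m+i-j)^{-\alpha}=m^{-\alpha}(1+\tfrac{i-j}{m})^{-\alpha}$ and apply the binomial series''), which hides the cancellation needed to beat the factor $\binom{m}{j}\sim m^{j}/j!$; your identification of the inner alternating sum as $(-1)^j$ times a $j$th forward difference, hence $\tfrac{\Gamma(\alpha+j)}{\Gamma(\alpha)}m^{-\alpha-j}(1+O(1/m))$ by the mean value theorem for finite differences, makes that cancellation explicit, and the hockey-stick identity then combines the $k$ contributions cleanly. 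One point you should state explicitly: carried to the end, your computation gives $\E X_m^{(k)}\approx \tfrac{\Gamma(k+\alpha)}{(k-1)!}\,m^{-\alpha}$, i.e.\ $\Gamma(k+\alpha)$ rather than the $\Gamma\bigl(k+\tfrac1\alpha\bigr)$ printed in the lemma. This is not an error on your part: the printed constant is inconsistent with the exact formula (take $k=1$, where it gives exactly $\Gamma(1+\alpha)m^{-\alpha}$) and with Corollary \ref{cor1}, which has $\Gamma(k+\alpha)$; the lemma statement (and the paper's proof, which writes $e^{-t^{\alpha}}$ for the tail) carries an $\alpha\leftrightarrow 1/\alpha$ slip, and your derivation yields the correct constant. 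Finally, ``collapses the sum to the single leading term'' is slightly misleading wording: all $k$ terms contribute at the same order $m^{-\alpha}$; the hockey-stick identity merely sums them into one closed form.
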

\begin{proof}
Note that
\[
\Pr(X_m^{(k)} > t) = \sum_{j=0}^{k-1} \binom{m}{j}\Pr(Y_1 \leq t)^j\Pr(Y_1 > t)^{n-j}
\]
(for the $k$th minimum to be larger than $t$, we need exactly $j$ variables to be at most $t$ and $m-j$ larger than $t$, $j = 0,1,\dots,k-1$). Integrating gives
\[
\E X_m^{(k)} = \int_0^\infty \Pr(X_m^{(k)} > t) \dd t =  \sum_{j=0}^{k-1} \binom{m}{j} \int_0^\infty \left(1 - e^{-t^\alpha}\right)^{j}e^{-(m-j)t^{\alpha}} \dd t.
\]
It remains to expand $\left(1 - e^{-t^\alpha}\right)^{j}$ and use $\int_0^\infty e^{-\lambda t^{\alpha}} \dd t = \Gamma\left(1+\alpha\right)\lambda^{-\alpha}$. The asymptotic statements follow by writing $(m+i-j)^{-\alpha} = m^{-\alpha}(1 + \frac{i-j}{m})^{-\alpha}$ and applying the binomial series.
\end{proof}
\begin{corollary}\label{cor1}
Let $\alpha > 0$ and let $\hY_1, \hY_2, \dots$ be i.i.d. copies of $Z_1$, where $Z_1$ is as defined in \eqref{defZ1}. For a positive integer $m$ and $1 \leq k \leq m$, let $\hX_m^{(k)}$ be the $k$th minimum of $\hY_1, \dots, \hY_m$. Then
\[
\E \hX_m^{(k)} = 2^{\a}\Gamma\left(1+\alpha\right)\sum_{j=0}^{k-1}\sum_{i=0}^{j} \binom{m}{j}\binom{j}{i}(-1)^{i}(m+i-j)^{-\alpha}.
\]
In particular, if $k$ is a constant as $m \to \infty$, then
\[
\E \hX_m^{(k)} \approx 2^{\a}\frac{1}{(k-1)!}\Gamma\left(k+\alpha\right)m^{-\alpha}.
\]
\end{corollary}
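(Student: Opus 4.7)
\textbf{Proof plan for Corollary \ref{cor1}.}

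My plan is to reduce the corollary directly to Lemma \ref{lemA} by identifying the distribution of $Z_W = Z_1$ as a simple rescaling of $Z_E^\alpha$. The defining relation \eqref{defZ1} says $\Pr(Z_W \geq x)^2 = \Pr(Z_E^\alpha \geq x) = e^{-x^{1/\alpha}}$ for $x \geq 0$, so taking square roots gives the closed-form tail
\[
\Pr(Z_W \geq x) = e^{-x^{1/\alpha}/2}.
\]
On the other hand, for an exponential rate-one $Z_E$,
\[
\Pr\!\bigl(2^\alpha Z_E^\alpha \geq x\bigr) = \Pr\!\bigl(Z_E \geq x^{1/\alpha}/2\bigr) = e^{-x^{1/\alpha}/2}.
\]
Hence $Z_W \stackrel{d}{=} 2^\alpha Z_E^\alpha$, i.e.\ $\hY_i \stackrel{d}{=} 2^\alpha Y_i$ with $Y_i$ as in Lemma \ref{lemA}.

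Since multiplication by a positive constant is monotone, the $k$th minimum of $2^\alpha Y_1,\dots,2^\alpha Y_m$ is exactly $2^\alpha$ times the $k$th minimum of $Y_1,\dots,Y_m$. Coupling $\hY_i = 2^\alpha Y_i$ therefore yields the pathwise identity $\hX_m^{(k)} = 2^\alpha X_m^{(k)}$, whence
\[
\E \hX_m^{(k)} \;=\; 2^\alpha \, \E X_m^{(k)}.
\]
Both the exact formula and the asymptotic approximation in the corollary then follow by multiplying the corresponding expressions of Lemma \ref{lemA} by $2^\alpha$.

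There is essentially no obstacle: the only content is recognising $Z_W$ as a deterministic scaling of $Z_E^\alpha$, after which monotonicity of order statistics under positive scaling finishes the job. The one thing I would double-check when writing the final version is that the asymptotic constant inherited from Lemma \ref{lemA} is stated consistently (the lemma writes $\Gamma(k+1/\alpha)$ whereas the corollary writes $\Gamma(k+\alpha)$, so I would align these by citing the lemma's asymptotic verbatim and only inserting the factor $2^\alpha$).
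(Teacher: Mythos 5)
Your proposal is correct and takes essentially the same route as the paper's own proof: identify $Z_1 \stackrel{d}{=} 2^{\a}Z_E^{\a}$ from $\Pr(Z_1 \geq x) = \Pr(Z_E^{\a} \geq x)^{1/2}$ and then multiply the conclusions of Lemma \ref{lemA} by $2^{\a}$, using that positive scaling commutes with taking the $k$th minimum. On the discrepancy you flag, note that the corollary's constant $\Gamma(k+\a)$ is the correct one (the $k$th minimum is $2^{\a}$ times the $\a$th power of the $k$th minimum of $m$ exponentials, whose expectation is asymptotically $\Gamma(k+\a)m^{-\a}/(k-1)!$), so you should not copy the lemma's $\Gamma\brac{k+\frac{1}{\a}}$ verbatim but rather correct it.
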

\begin{proof}
This follows from Lemma \ref{lemA} and the fact that $Z_1$ has the same distribution as $2^{\a}Z$ because we have $\Pr(Z_1 \geq x) = \Pr(Z \geq x)^{1/2} = \Pr(2^{\a}Z \geq x)$. (Here $Z=Z_E^\a$.)
\end{proof}

\begin{lemma}\label{lemB}
Let $\alpha \leq 1$ and let $Y_1, Y_2, \dots$ be i.i.d. copies of $Z_E^\a$. Then for $t \geq 0$, we have
\[
\Pr(Y_1+\dots + Y_n \leq t) \leq \frac{t^{n/\alpha}}{\alpha^nn!n^{n(1/\alpha-1)}}.
\]
\end{lemma}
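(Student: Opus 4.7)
The plan is to compute the density of $Y_i=Z_E^\a$, drop the exponential factor in the resulting multiple integral, evaluate it by the Dirichlet integral formula, and bound the resulting Gamma-function ratio by AM--GM on the simplex.

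First I would write $Y_i=X_i^\a$ with $X_i$ i.i.d.\ exponential of rate $1$, so that the density of each $Y_i$ is $f_Y(y)=\frac{1}{\a}y^{1/\a-1}e^{-y^{1/\a}}\leq \frac{1}{\a}y^{1/\a-1}$ (using $e^{-y^{1/\a}}\leq 1$). Substituting this bound into $\Pr(\sum Y_i\leq t)$ and applying the standard Dirichlet integral formula with all parameters equal to $1/\a$ gives
\[
\Pr(Y_1+\cdots+Y_n\leq t)\leq \frac{1}{\a^n}\int_{\sum y_i\leq t,\, y_i\geq 0}\prod_{i=1}^n y_i^{1/\a-1}\,dy = \frac{t^{n/\a}}{\a^n}\cdot\frac{\G(1/\a)^n}{\G(1+n/\a)}.
\]

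It will then suffice to prove $\G(1/\a)^n/\G(1+n/\a)\leq \a/(n!\,n^{n(1/\a-1)})$, which absorbs the factor $1/\a^n$ and yields the claimed bound (with one factor of $\a\leq 1$ to spare). For this I would invoke the Beta-function identity $\G(1/\a)^n/\G(n/\a)=B(1/\a,\dots,1/\a)=\int_\Delta\prod_{i=1}^n u_i^{1/\a-1}\,du$, where the integral is over the standard simplex $\Delta=\{u\in\R^n_{\geq 0}:\sum u_i=1\}$ parametrised by $(u_1,\dots,u_{n-1})$ on a region of Lebesgue volume $1/(n-1)!$. Because $\a\leq 1$ makes $1/\a-1\geq 0$, AM--GM on $\Delta$ (where $\prod u_i$ is maximised at $u_i=1/n$) yields $\prod u_i^{1/\a-1}\leq n^{-n(1/\a-1)}$ pointwise, and integrating over $\Delta$ gives $B(1/\a,\dots,1/\a)\leq n^{-n(1/\a-1)}/(n-1)!$. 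Multiplying by $\a/n$ via $\G(1+n/\a)=(n/\a)\G(n/\a)$ produces the required Gamma-ratio inequality.

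The main obstacle is this Gamma-function ratio bound, and the hypothesis $\a\leq 1$ enters precisely at the AM--GM step: the sign of $1/\a-1$ controls whether the maximum of $\prod u_i^{1/\a-1}$ on the simplex sits at the centroid (giving the upper bound we need) or at a vertex (producing only a lower bound, which would be useless here). The density estimate and the Dirichlet integral evaluation are entirely routine by comparison.
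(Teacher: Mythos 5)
Your proof is correct. It shares the paper's two essential ingredients -- dropping the factor $e^{-x_i^{1/\alpha}}\leq 1$ and applying AM--GM with $1/\alpha-1\geq 0$ together with a simplex volume -- but you organize them differently: the paper bounds the integrand pointwise on the solid region $\{x_i\geq 0,\ \sum x_i\leq t\}$ by $\alpha^{-n}(t/n)^{n(1/\alpha-1)}$ and multiplies by its volume $t^n/n!$, never touching Gamma functions, whereas you first evaluate the integral exactly by the Dirichlet formula and then prove the Gamma-ratio inequality $\Gamma(1/\alpha)^n/\Gamma(1+n/\alpha)\leq \alpha/(n!\,n^{n(1/\alpha-1)})$ via the Beta-integral representation and AM--GM on the unit simplex. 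Your detour is sound (the Dirichlet normalization and the identity $\Gamma(1+n/\alpha)=(n/\alpha)\Gamma(n/\alpha)$ are used correctly), and it even yields a slightly stronger bound, with an extra factor of $\alpha\leq 1$; the paper's route buys simplicity, avoiding the Dirichlet/Beta machinery altogether, while yours isolates a clean and reusable estimate on $\Gamma(1/\alpha)^n/\Gamma(1+n/\alpha)$.
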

\begin{proof}
Using the density,
\[
\Pr(Y_1+\dots + Y_n \leq t)  = \int_{x_1, \dots, x_n \geq 0, \sum x_i \leq t} \prod_{i=1}^n \alpha^{-1} x_i^{1/\alpha-1}e^{-x_i^{1/\alpha}} \dd x_1\dots\dd x_n.
\]
By the AM-GM inequality,
\[
\prod_{i=1}^n x_i \leq \left(\frac{\sum_{i=1}^n x_i}{n}\right)^n,
\]
and trivially $e^{-x_i^{1/\alpha}} \leq 1$, so the integrand can be pointwise bounded as follows
\[
 \prod_{i=1}^n \alpha^{-1} x_i^{1/\alpha-1}e^{-x_i^{1/\alpha}}  \leq \alpha^{-n}\left(\frac{\sum_{i=1}^n x_i}{n}\right)^{n(1/\alpha-1)} \leq \alpha^{-n} \frac{t^{n(1/\alpha-1)}}{n^{n(1/\alpha-1)}}
\]
Thus,
\[
\Pr(Y_1+\dots + Y_n \leq t)  \leq \alpha^{-n} \frac{t^{n(1/\alpha-1)}}{n^{n(1/\alpha-1)}}\cdot\mathrm{vol}\left\{x_1,\dots,x_n \geq 0, \ \sum_{i=1}^n x_i \leq t\right\} = \alpha^{-n}\frac{t^{n/\alpha}}{n!n^{n(1/\alpha-1)}}.
\]
\end{proof}

\end{document}